\newtheorem{theorem}{Theorem}[section]
\theoremstyle{definition}
\theoremstyle{remark}
\numberwithin{equation}{section}
\begin{document}

\title[Solving Schiffer's Problem Almost Surely]{Solving Schiffer's Problem in Inverse Scattering Theory Almost Surely}

\author{Hongyu Liu}
\address{Department of Mathematics, City University of Hong Kong, Hong Kong SAR, China}
\email{hongyu.liuip@gmail.com, hongyliu@cityu.edu.hk}

\keywords{inverse scattering, obstacle, shape determination, single far-field measurement, probability, almost surely}
\thanks{}
\date{August 18, 2024 }

\subjclass[2010]{35R30, 35P25}

\maketitle

\begin{abstract}

In this short note, we present a probabilistic perspective on the Schiffer's problem in the inverse scattering theory, which asks whether one can uniquely determine the shape of an unknown obstacle by a single far-field measurement. It is a longstanding problem and has received considerable studies in the literature. We show that this conjecture holds true in more general settings in the probability sense. Our new perspective has important implications from the practical viewpoint and also points an interesting direction of research for broader inverse problems.

\end{abstract}

\section{Schiffer's problem in inverse scattering theory}

Let $k\in\mathbb{R}_+$ signify a wavenumber, and $\Omega\subset\mathbb{R}^n$, $n\geq 2$, denote an impenetrable obstacle. It is assumed that $\Omega$ is a bounded Lipschitz domain with a connected complement $\mathbb{R}^n\backslash\overline{\Omega}$. Let $u^i$ be an entire solution to the following Helmholtz equation:
\begin{equation}\label{eq:h1}
(\Delta+k^2)u^i=0\quad\mbox{in}\ \ \mathbb{R}^n. 
\end{equation}
We consider the following time-harmonic scattering problem:
\begin{equation}\label{eq:h2}
\begin{cases}
(\Delta+k^2) u=0,\quad & \mbox{in}\ \mathbb{R}^n\backslash\overline{\Omega},\medskip\\
\mathcal{B}(u)=0,\quad & \mbox{on}\ \partial\Omega,\medskip\\
u^s(x):=u(x)-u^i(x),\quad & x\in\mathbb{R}^n\backslash\overline{\Omega},\medskip\\
\lim_{|x|\rightarrow\infty} |x|^{(n-1)/2}\left(\frac{\partial u^s}{\partial |x|}-\mathrm{i}k u^s\right)=0,
\end{cases}
\end{equation}
where $\mathrm{i}:=\sqrt{-1}$, and $\mathcal{B}(u)=u$ or $\partial u/\partial\nu$ or $\partial u/\partial\nu+\lambda u$. Here, $\nu\in\mathbb{S}^{n-1}$ denotes the exterior unit normal to $\partial\Omega$, and $\lambda\in L^\infty(\partial\Omega)$ with $\Im\lambda>0$ signifies a boundary impedance parameter. It is known that there exists a unique solution $u\in H_{loc}^1(\mathbb{R}^n\backslash\overline{\Omega})$ which possesses the following asymptotic expansion (cf. \cite{CK,LL}):
\begin{equation}\label{eq:h3}
u^s(x)=\frac{e^{\mathrm{i}k|x|}}{|x|^{(n-1)/2}} u_\infty(\hat{x})+\mathcal{O}\left(\frac{1}{|x|^{(n+1)/2}}\right),\quad \hat x:=\frac{x}{|x|}\in\mathbb{S}^{n-1}, 
\end{equation}
where $u_\infty$ is known as the far-field pattern.

In the physical setup, one sends a probing wave $u^i$ to interrogate the obstacle $(\Omega, \mathcal{B})$ and generate the total wave field $u$. The scattered wave field $u^s$ is then generated by the linear superposition given in the third equation of \eqref{eq:h2}. In general, one takes the incident wave $u^i$ to be the time-harmonic plane wave of the form:
\begin{equation}\label{eq:p1}
u^i(x)=e^{\mathrm{i}kx\cdot d},\quad d\in\mathbb{S}^{n-1},
\end{equation}
where $d$ signifies the incident direction. The scattering information is completely encoded into the far-field pattern, which we write as $u_\infty(\hat x, u^i, (\Omega, \mathcal{B}))=u^i(\hat x, d, k, (\Omega, \mathcal{B}))$. Hence, an inverse scattering problem of practical importance is to recover the obstacle by measurement of the far-field pattern:
\begin{equation}\label{eq:h4}
u_\infty\longrightarrow (\Omega, \mathcal{B}). 
\end{equation}
The unique identifiability issue for this inverse problem asks whether the correspondence between $u_\infty$ and $(\Omega, \mathcal{B})$ is one-to-one. There is a widespread belief that one can establish the above unique correspondence by using a single far-field pattern, namely $u_\infty(\hat x)$ with $\hat x\in\mathbb{S}^{n-1}$ generated from a fixed $u^i$. By cardinality counting, it can be easily seen that the problem is formally determined in this case. However, it remains a longstanding problems in the literature, and is known as the Schiffer's problem in the inverse scattering theory. In fact, M. Schiffer made a pioneering contribution by showing that one can establish the unique identifiability result by using infinitely many measurements \cite{LP}. The other known results by a single far-field measurement require the obstacle from an a-priori shape class, say e.g. balls, or polytopes. Instead of discussing the long list of existing literature, we refer to \cite{CK,CK2,DL,L1} for more comprehensive reviews and surveys. 

In this short note, we show that the Schiffer's problem can be solved in more general setups almost surely from a certain probability sense. Our argument is basically based on the spectral idea of M. Schiffer as well as the analytic dependence of $u_\infty$ on its arguments. Nevertheless, we still think it is worth presenting the results. This novel perspective brings new insights of practical importance. In fact, the inverse problem \eqref{eq:h4} can be recast as the following operator equation:
\begin{equation}\label{eq:h5}
\mathcal{F}((\Omega, \mathcal{B}))=u_\infty(\hat x, u^i), 
\end{equation}
where the nonlinear operator $\mathcal{F}$ is defined by the forward scattering system \eqref{eq:h2}. Our study basically indicates that one can pick up a fixed incident field $u^i$ to generate the far-field pattern for solving \eqref{eq:h5}, then the chance of failing to recover $(\Omega,\mathcal{B})$ is zero. Furthermore, from an algorithmic point of view, the equation \eqref{eq:h5} is usually solved in an iterative way, and if restarting process is needed, one can choose to use one more far-field pattern generated by a different $u^i$. In doing this, the chance to succeed is almost surely guaranteed. This also explains why in many existing numerical results in the literature, one can always reconstruct the obstacle by using a few (not too many) far-field measurements.

\section{The Shiffer's problem is solved almost surely}

In order to ease the exposition, we introduce several simplifications. It is emphasised that those simplifications are not necessary from the technical point of view, and all of our results hold in the general setting. First, we assume that $\Omega$ is simply connected in the Lipschitz class. Second, we always take the incident wave to be the plane wave \eqref{eq:p1}. Third, more than often, we take $\mathcal{B}(u)=u$ as the representative case for our discussion, which corresponds to $\Omega$ being a so-called sound-soft obstacle. Nevertheless, if necessary, we also discuss the needed extensions to the other cases with $\mathcal{B}(u)=\partial_\nu u$ (sound-hard obstacle), or $\mathcal{B}(u)=\partial_\nu u+\lambda u$ (impedance obstacle). Finally, we shall mainly consider the recovery of the shape of the obstacle, namely $\Omega$, but not its physical type, namely $\mathcal{B}$. In fact, as soon as $\Omega$ can be determined, $\mathcal{B}$ can be readily determined by the classical UCP property; see e.g. \cite{DL}. 

We first discuss the original idea of M. Schiffer who proved that a sound-soft obstacle $\Omega$ can be uniquely determined by $u_\infty(\hat x, d, k)$ with: (i). $k\in I$ with $I$ being an open interval in $\mathbb{R}_+$ and $d\in\mathbb{R}_+$ is fixed; or (ii). $k\in\mathbb{R}_+$ is fixed and $d\in\mathcal{D}$ with $\mathcal{D}$ being an infinite set in $\mathbb{S}^{n-1}$. We refer to \cite{CK} for the detailed proof. Here, for our subsequent need, we discuss the key observation. Let $\Omega$ and $\Omega'$ be two sound-soft obstacles and assume that they produce the same far-field pattern, namely $u_\infty(\hat x, \Omega)=u_\infty(\hat x, \Omega')$ for all $\hat x\in\mathbb{S}^{n-1}$. By Rellich's Theorem \cite{CK}, one has that $u=u'$ in the unbounded connected component of $\mathbb{R}^n\backslash\overline{(\Omega\cup\Omega')}$ (written as $\Omega_c$ in what follows), where $u$ and $u'$ are respectively the total fields correspond to $\Omega$ and $\Omega'$. If $\Omega\neq \Omega'$, one can assume without of generality that there exists a connected component of $\mathbb{R}^n\backslash\Omega_c$, written as $\Omega^*$, such that $\Omega^*\subset\mathbb{R}^n\backslash\overline{\Omega}$. It is clear that
\begin{equation}\label{eq:s1}
-\Delta u=k^2 u\quad\mbox{in}\ \ \Omega^*, \quad u|_{\partial\Omega^*}=0. 
\end{equation}
That is, $\lambda=k^2$ is a Dirichlet Laplacian eigenvalue to $\Omega^*$ and $u|_{\Omega}$ is the corresponding eigenfunction. Hence, to prove the assertion in (i), it is sufficient to note the discreteness of the set of Dirichlet Laplacian eigenvalues; and to prove the assertion in (ii), it is sufficient to use the fact that for a fixed eigenvalue $k^2$, the corresponding eigen-space is finite dimensional. 

We recall the Borel probability measure $\mu$ on any bounded open set $\mathcal{M}\subset\mathbb{R}^n$. For an event set $E\subset\mathcal{M}$, the probability is defined in the usual way as $\mu(E)/\mu(\mathcal{M})$. We have that

\begin{theorem}\label{thm:1}
Let $\Omega$ and $\Omega'$ be two sound-soft obstacles. If $\Omega\neq\Omega'$, then it holds almost surely (a.s.) that
\begin{equation}\label{eq:s1}
u_\infty(\Omega, e^{\mathrm{i}kx\cdot d})\equiv\hspace*{-4mm}\backslash\ u_\infty(\Omega', e^{\mathrm{i}kx\cdot d}),
\end{equation}
for a given $e^{\mathrm{i}kx\cdot d}$ with $k$ being randomly chosen from $(\underline{k}, \overline{k})$ and $d$ being randomly chosen from $\mathbb{S}^{n-1}$, respectively, where $\underline{k}$ and $\overline{k}$ are two given finite numbers in $\mathbb{R}_+$. 
\end{theorem}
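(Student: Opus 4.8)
The plan is to convert Schiffer's spectral observation, recalled above, into a null-set statement for the randomized wavenumber. First I fix the pair $(\Omega,\Omega')$ with $\Omega\neq\Omega'$ once and for all, and I record the crucial point that the auxiliary region $\Omega^*$ --- a bounded connected component of $\mathbb{R}^n\setminus\Omega_c$ lying in $\mathbb{R}^n\setminus\overline{\Omega}$ --- is determined purely by the geometry of the two obstacles, and is therefore \emph{independent} of the wavenumber $k$ and the incident direction $d$. Suppose that for some realization $(k,d)$ the two far-field patterns coincide. By Rellich's theorem together with the reduction recalled above, the total field $u$ restricts on $\Omega^*$ to a solution of $-\Delta u=k^2u$ in $\Omega^*$ with $u|_{\partial\Omega^*}=0$. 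The one delicate point is that this eigenfunction must be nontrivial: were $u\equiv0$ on $\Omega^*$, then, since $u$ is a real-analytic Helmholtz solution on the connected set $\mathbb{R}^n\setminus\overline{\Omega}$, unique continuation would force $u\equiv0$ on all of $\mathbb{R}^n\setminus\overline{\Omega}$, in particular on $\Omega_c$, giving $u^s=-u^i$ there, which contradicts the fact that $u^s$ satisfies the Sommerfeld radiation condition whereas the plane wave $u^i$ does not. Hence $k^2$ is a genuine Dirichlet eigenvalue of $\Omega^*$.

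Next I would invoke discreteness of the spectrum. For an arbitrary bounded open set $\Omega^*$ the embedding $H_0^1(\Omega^*)\hookrightarrow L^2(\Omega^*)$ is compact --- extend by zero into an enclosing ball and apply Rellich--Kondrachov --- so the Dirichlet Laplacian has compact resolvent and no boundary regularity of $\Omega^*$ is required. Its spectrum is therefore a countable set $0<\lambda_1\le\lambda_2\le\cdots\to\infty$ with no finite accumulation point, and consequently the set of exceptional wavenumbers
\[
B:=\bigl\{\,k\in(\underline{k},\overline{k}):k^2\ \text{is a Dirichlet eigenvalue of}\ \Omega^*\,\bigr\}
\]
is at most countable.

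Finally I pass to probability. Denoting the sample space by $\mathcal{M}=(\underline{k},\overline{k})\times\mathbb{S}^{n-1}$ and the coincidence event by $A\subset\mathcal{M}$, the reduction above yields the inclusion $A\subseteq B\times\mathbb{S}^{n-1}$. Since $B$ is countable and the $k$-marginal of $\mu$ is non-atomic, $\mu\bigl(B\times\mathbb{S}^{n-1}\bigr)=0$, whence $\mu(A)=0$ and the probability $\mu(A)/\mu(\mathcal{M})$ vanishes; this is exactly the asserted almost-sure non-coincidence. I expect the main obstacle to lie not in the measure-theoretic bookkeeping but in the two points flagged above: establishing nontriviality of the induced eigenfunction, so that $k^2$ is truly a spectral value rather than a vacuous identity, and confirming non-atomicity of the chosen Borel probability measure, without which a countable exceptional set need not be null. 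A complementary argument that also harnesses the randomness in $d$ is to consider $g(k,d):=\lVert u_\infty(\Omega,\cdot)-u_\infty(\Omega',\cdot)\rVert_{L^2(\mathbb{S}^{n-1})}^2$; by the analytic dependence of $u_\infty$ on $(k,d)$ the function $g$ is real-analytic, the discreteness above forbids $g\equiv0$ on any $k$-interval, and the zero set of a nontrivial real-analytic function has measure zero, again giving the claim.
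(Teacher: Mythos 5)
Your argument follows the same Schiffer-type spectral reduction that the paper's one-sentence proof appeals to, but you fill in several points the paper leaves implicit, and these are correct and worthwhile: the nontriviality of the induced eigenfunction (via interior analyticity/unique continuation plus the incompatibility of $u^s=-u^i$ with the decay forced by the radiation condition), and the discreteness of the Dirichlet spectrum on the possibly very irregular set $\Omega^*$, via compactness of $H_0^1(\Omega^*)\hookrightarrow L^2(\Omega^*)$ for an arbitrary bounded open set. The latter observation is exactly what separates the sound-soft case from the sound-hard case discussed after Theorem~\ref{thm:1}. You also isolate the useful structural fact that $\Omega^*$, hence the exceptional set $B$ of wavenumbers, is fixed by the geometry of $(\Omega,\Omega')$ and is uniform in $d$. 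One caveat you share with the paper: to run the $H_0^1$ spectral theory you need $u|_{\Omega^*}\in H_0^1(\Omega^*)$, i.e., that the vanishing of $u$ on the possibly non-Lipschitz boundary $\partial\Omega^*$ holds in the $H_0^1$ sense, which is part of the ``reduction recalled above'' but is not automatic.

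There is, however, a gap in coverage relative to what the paper proves. Your measure-theoretic conclusion uses only the randomness in $k$: the inclusion $A\subseteq B\times\mathbb{S}^{n-1}$ settles the claim when $k$ is randomized (jointly with $d$, or with $d$ held fixed), but it is vacuous when $k$ is held fixed at a value of $B$ and only $d$ is randomized. The paper's proof asserts that ``the probability measures for the sets of $k$ and $d$ \dots are both zero,'' i.e., it invokes both halves of Schiffer's argument recalled in Section~2: (i) discreteness of the Dirichlet spectrum (the half you prove), and (ii) for fixed $k$, finite-dimensionality of the eigenspace of $k^2$ on $\Omega^*$, which bounds the number of bad directions. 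The parallel proof of Theorem~\ref{thm:2}, which explicitly lists both cases, indicates that the fixed-$k$, random-$d$ statement is part of the intended content. Your complementary analytic-function argument does not repair this: to exclude $g(k,\cdot)\equiv 0$ for a fixed $k\in B$ one again needs (ii). The fix is short and classical: if $d_1,\dots,d_m$ are distinct bad directions for the same $k$, the corresponding total fields restrict to Dirichlet eigenfunctions on $\Omega^*$ for the eigenvalue $k^2$; a vanishing linear combination on $\Omega^*$ propagates by unique continuation to all of $\mathbb{R}^n\setminus\overline{\Omega}$, giving $\sum_j c_j e^{\mathrm{i}kx\cdot d_j}=-\sum_j c_j u_j^s$, whose right-hand side decays at infinity while the left-hand side does not unless every $c_j=0$; hence the eigenfunctions are linearly independent, $m$ is bounded by the finite multiplicity of $k^2$, and the set of bad $d$'s for fixed $k$ is finite, hence null.
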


\begin{proof}
From Schiffer's argument, it is easily seen that the probability measures for the sets of $k$ and $d$ such that $u_\infty(\Omega)=u_\infty(\Omega')$ are both zero. 
\end{proof}

Theorem~\ref{thm:1} is a probabilistic interpretation of Schiffer's result. However, it cannot be extended to the case with sound-hard or impedance obstacles. Indeed, (e.g) in the sound-hard case, the spectral property of the Neumann Laplacian in $\Omega^*$ critically depends the regularity of $\Omega^*$, but which could be very ``irregular", making the situation radically much more complicated. Nevertheless, from the probability perspective, we can have

\begin{theorem}\label{thm:2}
Let $\Omega$ and $\Omega'$ be two sound-hard/impedance obstacles. If $u_\infty(\Omega, e^{\mathrm{i}kx\cdot d})\equiv u_\infty(\Omega, e^{\mathrm{i}kx\cdot d})$ for a given $e^{\mathrm{i}kx\cdot d}$ with $k\in (\underline{k}, \overline{k})$ and $d\in\mathbb{S}^{n-1}$ being randomly chosen respectively, then it holds a.s. that
\begin{equation}\label{eq:s2}
d_{\mathcal{H}}(\Omega, \Omega')\leq \mathrm{diam}(\Omega)+\mathrm{diam}(\Omega'), 
\end{equation}
where $d_\mathcal{H}$ denotes the Hausdorff distance and $\mathrm{diam}$ denotes the diameter. 
\end{theorem}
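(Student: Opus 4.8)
The plan is to prove \eqref{eq:s2} by a geometric dichotomy that reduces everything to a Schiffer-type spectral argument, the point being that the only configuration in which \eqref{eq:s2} can fail is one in which the troublesome set $\Omega^*$ turns out to be regular after all. First I would split according to whether the closures $\overline{\Omega}$ and $\overline{\Omega'}$ meet. If $\overline{\Omega}\cap\overline{\Omega'}\neq\emptyset$, I pick $z$ in the intersection; then for every $x\in\Omega$ and $y\in\Omega'$ one has $\mathrm{dist}(x,\Omega')\leq|x-z|\leq\mathrm{diam}(\Omega)$ and $\mathrm{dist}(y,\Omega)\leq|y-z|\leq\mathrm{diam}(\Omega')$, so that \eqref{eq:s2} holds deterministically, with no reference to $(k,d)$. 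Consequently the bound can only fail when $\overline{\Omega}\cap\overline{\Omega'}=\emptyset$, and it suffices to show that two disjoint obstacles share a far-field pattern only for a set of $(k,d)$ of measure zero.

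Second, in the disjoint case I would revisit the construction recalled in Section~1 and exploit the simplification it now enjoys. Since $\overline{\Omega}\cap\overline{\Omega'}=\emptyset$ and both complements are connected, the unbounded component $\Omega_c$ equals the whole exterior $\mathbb{R}^n\setminus(\overline{\Omega}\cup\overline{\Omega'})$, so the component $\Omega^*$ of $\mathbb{R}^n\setminus\Omega_c$ lying in $\mathbb{R}^n\setminus\overline{\Omega}$ is exactly $\Omega'$ itself, a bounded Lipschitz domain rather than the potentially irregular set feared in the remark after Theorem~\ref{thm:1}. Assuming $u_\infty(\Omega,\cdot)\equiv u_\infty(\Omega',\cdot)$, Rellich's theorem gives $u=u'$ in $\Omega_c$; since $\partial\Omega'\subset\mathbb{R}^n\setminus\overline{\Omega}$, the field $u$ is real-analytic across $\partial\Omega'$, its Cauchy data are continuous there, and the boundary condition $\mathcal{B}(u')=0$ transfers to $u$. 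Hence $u|_{\Omega'}$ solves $(\Delta+k^2)u=0$ in $\Omega'$ with $\partial_\nu u=0$ (sound-hard) or $\partial_\nu u+\lambda u=0$ (impedance) on $\partial\Omega'$, so $k^2$ is an eigenvalue of the corresponding Laplacian on $\Omega'$. Nontriviality of the eigenfunction follows as in Schiffer's argument: if $u\equiv0$ on the open set $\Omega'$, unique continuation forces $u\equiv0$ on the connected set $\mathbb{R}^n\setminus\overline{\Omega}$, whence $u^s=-u^i$ would satisfy the radiation condition, impossible for a plane wave.

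Third, I would close the argument by invoking discreteness. Because $\Omega'$ is Lipschitz, $H^1(\Omega')$ embeds compactly in $L^2(\Omega')$, so the Neumann Laplacian has compact resolvent and discrete spectrum; therefore $\{k\in(\underline{k},\overline{k}):k^2\in\mathrm{spec}(-\Delta_N^{\Omega'})\}$ is countable, hence Lebesgue-null, and the associated bad set of $(k,d)$ has product measure zero. In the impedance case the conclusion is even stronger: testing the equation against $\overline{u}$ and taking imaginary parts gives $\int_{\partial\Omega'}\Im\lambda\,|u|^2\,\mathrm{d}S=0$ for real $k^2$, and since $\Im\lambda>0$ this forces $u=\partial_\nu u=0$ on $\partial\Omega'$, so $u\equiv0$ by unique continuation, contradicting nontriviality; thus disjoint impedance obstacles cannot share a far-field pattern for any real $k>0$. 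In either case the set of $(k,d)$ realizing the hypothesis in the disjoint configuration is null, so a.s. \eqref{eq:s2} holds.

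The genuine difficulty the paper flags, namely the possible irregularity of $\Omega^*$, is exactly what rules out full uniqueness here and forces the weaker Hausdorff-distance conclusion; my argument circumvents it by the observation that failure of \eqref{eq:s2} can occur only in the disjoint configuration, where $\Omega^*=\Omega'$ is Lipschitz and classical spectral theory applies. The steps where I expect the bookkeeping to concentrate, and hence the main obstacles, are the clean transfer of the boundary condition across $\partial\Omega'$ together with the verification that $u|_{\Omega'}$ is an honest Neumann/impedance eigenfunction, and the topological claim that disjointness of the closures really does make $\Omega^*$ coincide with $\Omega'$.
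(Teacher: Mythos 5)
Your proposal follows essentially the same route as the paper's own proof: the dichotomy between intersecting closures (where \eqref{eq:s2} is a deterministic triangle-inequality fact) and disjoint closures (where a Schiffer-type spectral argument applies), with the key observation that in the disjoint configuration the troublesome domain $\Omega^*$ is just $\Omega'$ itself, a Lipschitz domain whose Neumann Laplacian has discrete spectrum. In several respects you are more careful than the paper's terse proof: you spell out the geometric step, you verify that $u|_{\Omega'}$ is a genuinely nontrivial eigenfunction (via unique continuation and the radiation condition), and you treat the impedance case, which the paper skips with ``let us only consider the sound-hard case''; there your Green's-identity argument even yields the stronger, deterministic conclusion that two disjoint impedance obstacles can never share a far-field pattern. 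You also rightly flag the topological claim that disjoint closures with connected complements force the joint exterior to be connected---a fact both you and the paper rely on tacitly (it is what makes $u=u'$ hold near all of $\partial\Omega'$), and which does require proof.

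There is, however, one genuine gap in your sound-hard argument. Read in parallel with Theorem~\ref{thm:1} and with the paper's proof---which asserts both ``(i) with a fixed $d$, the possible $k$'s form a discrete set'' and ``(ii) with a fixed $k$, the possible $d$'s are finitely many''---the theorem covers two randomization scenarios: $k$ random (for any fixed $d$), and $d$ random (for any fixed $k$). Your containment of the bad set inside $\{k:\,k^2\in\mathrm{spec}(-\Delta_N^{\Omega'})\}\times\mathbb{S}^{n-1}$ disposes of the first scenario and of the jointly-random one, but it says nothing in the second scenario when the fixed wavenumber happens to satisfy $k^2\in\mathrm{spec}(-\Delta_N^{\Omega'})$: for such a $k$ your argument cannot exclude that a positive-measure set of directions $d$ is bad. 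To close this you need the second half of Schiffer's argument, which the paper invokes as claim (ii): the eigenspace of $k^2$ is finite-dimensional (compact resolvent implies finite multiplicity), and the restrictions $u(\cdot;d_1)|_{\Omega'},\dots,u(\cdot;d_m)|_{\Omega'}$ of total fields for distinct directions are linearly independent. Indeed, if $\sum_j c_j u(\cdot;d_j)\equiv 0$ on $\Omega'$, then by analyticity it vanishes on the connected set $\mathbb{R}^n\setminus\overline{\Omega}$, so $\sum_j c_j e^{\mathrm{i}kx\cdot d_j}=-\sum_j c_j u^s(\cdot;d_j)$ would be a radiating superposition of plane waves, forcing all $c_j=0$. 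Hence at most finitely many (at most the eigenspace dimension) directions can be bad for a fixed $k$, which is what gives measure zero in that scenario. With this lemma added, your proof is complete and, in the impedance case, strictly stronger than the paper's.
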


\begin{proof}
Let us only consider the sound-hard case. If $\overline{\Omega}\cap\overline{\Omega'}=\emptyset$, then $k^2$ is a Neumann Laplacian eigenvalue in $\Omega$ with the eigenfunction being $u'$, and it is also a Neumann Laplacian eigenvalue in $\Omega'$ with the eigenfunction being $u$. Following the spirit of Schiffer's argument, one can show that: (i). with a fixed $d$, the possible $k$'s form a discrete set; (ii). with a fixed $k$, the possible $d$'s are finitely many. Hence, the probability measure for the set of those ``bad" $k$'s and $d$'s are zero. Therefore, it holds a.s. that $\Omega\cap\Omega'\neq \emptyset$, which readily implies \eqref{eq:s2}. 
\end{proof}

Theorem~\ref{thm:2} has some interesting physical implication. In fact, if we have some a-priori knowledge about the size of the obstacle, say e.g. $\mathrm{diam}(\Omega)\leq L$, then Theorem~\ref{thm:2} indicates that by a single far-field measurement, one can recover the obstacle with a high probability not far-away from its true shape (in fact, in a vicinity of distance $L$ from the true shape). 

Next, we consider a more practical situation, where one can take the radar/sonar imaging as a practical motivation. Let $\mathcal{A}=\{\Omega_j\}_{j=0}^N$ be an a-priori set of base-shapes. The target obstacle $\Omega$ is obtained from translating or rotating those base-shapes:
\begin{equation}\label{eq:r1}
\Omega=z_0+\mathcal{R}_U(\Omega_j), \quad z_0\in\mathbb{R}^n,
\end{equation}
where $\mathcal{R}_U(\Omega_j):=U\Omega_j=\{Ux; x\in \Omega_j\}$ with $U\in SO(n)$ being a rotation matrix. Taking the radar industry as an example, one knows in advance all the possible types of the target aircrafts and hence all the possible base-shapes. Next, in order to ease the exposition, we assume that they are sound-hard obstacles. Moreover, it is unobjectionable to assume that 
\begin{equation}\label{eq:r2}
u_\infty(\Omega_j, e^{\mathrm{i}kx\cdot d})\equiv\hspace*{-4mm}\backslash\ u_\infty(\Omega_l, e^{\mathrm{i}kx\cdot d}), \quad \Omega_j, \Omega_l\in\mathcal{A},\ \ j\neq l,
\end{equation}
for any given $e^{\mathrm{i}k x\cdot d}$ with $k\in (\underline{k}, \overline{k})$ and $d\in\mathbb{S}^{n-1}$. In fact, if \eqref{eq:r2} does not hold true, one readily has a counter example to the Schiffer's problem. Finally, we assume that the location of $\Omega$, namely $z_0$, can be uniquely determined a priori. In fact, the determination of the location can be conducted in a rather simple and stable manner in the inverse scattering theory; see e.g. \cite{LLZ}. 

We can show that

\begin{theorem}\label{thm:3}
Let $\Omega$ be descried as above. The $\Omega$ can be uniquely determined a.s. by a single far-field measurement associated with $e^{\mathrm{i}k x\cdot d}$ with $k\in (\underline{k}, \overline{k})$ being fixed and $d\in\mathbb{S}^{n-1}$ being randomly chosen.
\end{theorem}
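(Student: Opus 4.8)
The plan is to convert the statement into a question about the far-field of the \emph{fixed} base-shapes, and then to bound the exceptional set of directions by Schiffer's spectral argument together with the analytic dependence of $u_\infty$ on the incident direction and on the rotation. \textbf{Reduction.} Since $z_0$ is known a priori, I would first remove the translation: from \eqref{eq:h3} one reads off the covariance relations $u_\infty(\hat x; d, z_0+\Omega)=e^{\mathrm{i}k z_0\cdot(d-\hat x)}\,u_\infty(\hat x; d,\Omega)$ and $u_\infty(\hat x; d, U\Omega)=u_\infty(U^{-1}\hat x; U^{-1}d,\Omega)$ for $U\in SO(n)$, and dividing out the known nonvanishing factor $e^{\mathrm{i}k z_0\cdot(d-\hat x)}$ reduces the measurement of $z_0+U\Omega_j$ to that of $U\Omega_j$. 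Writing $(j,U)$ for the true configuration, applying the rotation relation on both sides and substituting $\hat y=U^{-1}\hat x$ shows that a coincidence $u_\infty(\cdot; d, U\Omega_j)\equiv u_\infty(\cdot; d, V\Omega_l)$ is equivalent to
\[
u_\infty(\hat y; a, \Omega_j)=u_\infty(\hat y; a, R\Omega_l)\quad\text{for all }\hat y,\qquad a:=U^{-1}d,\ \ R:=U^{-1}V,
\]
with $R\Omega_l\neq\Omega_j$ precisely when $V\Omega_l\neq U\Omega_j$. As $U$ is fixed, $a$ is again uniform on $\mathbb{S}^{n-1}$ and $R$ sweeps all of $SO(n)$. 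Thus it suffices to prove: for a.e.\ direction $a$ the fixed obstacle $\Omega_j$ shares its far-field at incidence $a$ with no rotated base-shape $R\Omega_l$ that differs from it as a set; and since there are finitely many indices $l$, I may fix $l$ and control the resulting exceptional set $B_l\subset\mathbb{S}^{n-1}$.

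\textbf{One candidate at a time.} For a single $R$ with $R\Omega_l\neq\Omega_j$ the two obstacles are genuinely distinct, so Rellich's theorem and the Neumann spectral argument used in the proof of Theorem~\ref{thm:2} show that, at the fixed wavenumber $k$, their patterns can agree only for finitely many incident directions $a$. Equivalently, the real-analytic map $a\mapsto\|u_\infty(\cdot; a,\Omega_j)-u_\infty(\cdot; a,R\Omega_l)\|_{L^2(\mathbb{S}^{n-1})}^2$ is not identically zero; each individual candidate therefore spoils only a null set of directions.

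\textbf{Main obstacle.} The hard point — and where the argument must do real work — is that $R$ ranges over the \emph{continuum} $SO(n)$, so $B_l$ is the projection to the $a$-axis of the null real-analytic variety $Z_l=\{(R,a):\text{the two patterns agree}\}\subset SO(n)\times\mathbb{S}^{n-1}$, and such a projection need not be null. A priori a whole analytic branch $a\mapsto R(a)$ of coincidences, with $R(a)\Omega_l\neq\Omega_j$, might exist over an open set of directions, rendering every such $a$ bad. To rule this out I would note that, by the stratification of $Z_l$ and the implicit function theorem, a positive-measure projection forces a real-analytic section $a\mapsto R(a)$ on some open $O\subset\mathbb{S}^{n-1}$ with $u_\infty(\cdot; a,\Omega_j)\equiv u_\infty(\cdot; a,R(a)\Omega_l)$ throughout $O$. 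Here the left obstacle $\Omega_j$ is \emph{frozen} while the right one moves analytically; differentiating the identity in $a$ and feeding it into Rellich's relation should pin the moving region $R(a)\Omega_l$ to $\Omega_j$, forcing $R(a)\Omega_l\equiv\Omega_j$ as sets on $O$ and contradicting nontriviality. Granting this, each $B_l$ is null, and the finite union $\bigcup_{l=0}^N B_l$ is null — which is exactly the almost-sure statement. The delicate place, where the Lipschitz regularity of the base-shapes and the analytic dependence on $R$ must be used with care, is precisely this exclusion of a moving family of far-field coincidences; everything else is the spectral bookkeeping already present in Theorems~\ref{thm:1} and \ref{thm:2}.
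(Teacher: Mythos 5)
Your reduction --- using the a priori known location together with the translation and rotation covariance identities to boil everything down to comparing a frozen base shape $\Omega_j$ with rotated base shapes $R\Omega_l$ at a rotated incident direction --- coincides with the paper's first step, which is carried out via Lemmas~2.1 and 3.1 of \cite{LLZ}. Your ``one candidate at a time'' step, however, is not justified as stated: the Neumann spectral argument in the proof of Theorem~\ref{thm:2} requires $\overline{\Omega}\cap\overline{\Omega'}=\emptyset$, since it is only for disjoint closures that $k^2$ becomes a Neumann eigenvalue of each obstacle with the other's total field as eigenfunction, and only then does finite multiplicity of the eigenspace bound the bad directions. After your reduction there is no reason for $\Omega_j$ and $R\Omega_l$ to have disjoint closures, and the paper stresses (in the paragraph preceding Theorem~\ref{thm:2}) that for overlapping sound-hard obstacles this spectral argument collapses, because the relevant difference component $\Omega^*$ can be too irregular for Neumann spectral theory. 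The paper's per-candidate argument is different and needs neither disjointness nor any ``finitely many bad directions'' claim: if two fixed candidates had equal patterns on a positive-measure set of directions, real-analyticity of $u_\infty$ in $d$ forces equality for all $d\in\mathbb{S}^{n-1}$, whereupon the classical fixed-frequency uniqueness result with all incident directions identifies the two obstacles.

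The deeper problem is the step you yourself flag as the ``main obstacle''. Identifying it is to your credit --- it is indeed the crux --- but your resolution (``differentiating the identity in $a$ and feeding it into Rellich's relation should pin the moving region'') is a hope, not an argument; nothing in the proposal carries it out, so the proof is incomplete exactly where it matters. Moreover, you never invoke assumption \eqref{eq:r2}, which is the ingredient the paper actually plays against the rotation continuum: keeping the measured direction fixed, the paper varies the rotation parameter $\theta$ and notes that a positive-measure set of coincidence-producing rotations would, by real-analyticity of $u_\infty$ in $\theta$, force coincidence for every $U\in SO(n)$, in particular for $U=U_0$; the rotation covariance identity then converts this into $u_\infty(\Omega_0, e^{\mathrm{i}kx\cdot d'})=u_\infty(\Omega_j, e^{\mathrm{i}kx\cdot d'})$ with $d'=U_0^Td$, contradicting \eqref{eq:r2}. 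To be fair, the paper is itself terse about how its two one-variable-at-a-time claims (a null set of rotations for the fixed direction; a null set of directions for each fixed candidate) combine into the almost-sure statement over the whole continuum of rotated candidates --- this is precisely the projection issue you raise, so your concern is well placed --- but a complete write-up must at minimum bring \eqref{eq:r2} and the analyticity in the rotation parameter into play, and your proposal does neither.
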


\begin{proof}
Without loss of generality, we can assume that 
\begin{equation}\label{eq:s1}
\Omega=z_0+\mathcal{R}_{U_0}(\Omega_0),
\end{equation}
where $z_0\in\mathbb{R}^n$ and $U_0\in SO(n)$. In what follows, we let $U\in SO(n)$ be parametrised as $U(\theta)$ with $\theta\in \Sigma$, where $\Sigma$ is a compact set of $\mathbb{R}^{n-1}$. Suppose that there exists $\Omega'=z+\mathcal{R}_{U}(\Omega_j)$, $j\neq 0$, such that
\begin{equation}\label{eq:s2}
u_\infty(\Omega, e^{\mathrm{i}kx\cdot d})=u_\infty(\Omega', e^{\mathrm{i}kx\cdot d}),
\end{equation} 
for a given $e^{\mathrm{i}kx\cdot d}$. Since the location can be a priori determined, we have $z=z_0$. By using Lemma~2.1 in \cite{LLZ}, we have 
\begin{equation}\label{eq:s3}
u_\infty(\mathcal{R}_{U_0}(\Omega_0), e^{\mathrm{i}kx\cdot d})=e^{\mathrm{i}k(\hat x-d)\cdot z_0} u_\infty(z_0+\mathcal{R}_{U_0}(\Omega_0), e^{\mathrm{i}kx\cdot d}),
\end{equation}
and 
\begin{equation}\label{eq:s4}
u_\infty(\mathcal{R}_{U}(\Omega_j), e^{\mathrm{i}kx\cdot d})=e^{\mathrm{i}k(\hat x-d)\cdot z_0} u_\infty(z_0+\mathcal{R}_{U}(\Omega_j), e^{\mathrm{i}kx\cdot d}). 
\end{equation}
By \eqref{eq:s2}--\eqref{eq:s4}, it further gives that 
\begin{equation}\label{eq:s5}
u_\infty(\mathcal{R}_{U_0}(\Omega_0), e^{\mathrm{i}kx\cdot d})=u_\infty(\mathcal{R}_{U}(\Omega_j), e^{\mathrm{i}kx\cdot d}). 
\end{equation}
We claim that there cannot exist a set $\Gamma\subset\Sigma$ with $\mu(\Gamma)>0$ such that \eqref{eq:s5} holds for $U(\theta)\in SO(n)$, $\theta\in\Gamma$. In fact, we note that $u_\infty$ is (real) analytic with respect to $\theta$, if the above claim does not hold, one has by analytic continuation that \eqref{eq:s5} holds for any $U\in SO(n)$, and in particular, 
\begin{equation}\label{eq:s6}
u_\infty(\mathcal{R}_{U_0}(\Omega_0), e^{\mathrm{i}kx\cdot d})=u_\infty(\mathcal{R}_{U_0}(\Omega_j), e^{\mathrm{i}kx\cdot d}). 
\end{equation}
By Lemma~3.1 in \cite{LLZ}, we have
\begin{equation}\label{eq:s7}
u_\infty(\hat x, \mathcal{R}_{U_0}(\Omega_0), e^{\mathrm{i}kx\cdot d})=u_\infty(U_0^T\hat x, \Omega_0, e^{\mathrm{i}kx\cdot (U_0^T d)}),
\end{equation}
and
\begin{equation}\label{eq:s8}
u_\infty(\hat x, \mathcal{R}_{U_0}(\Omega_j), e^{\mathrm{i}kx\cdot d})=u_\infty(U_0^T\hat x, \Omega_j, e^{\mathrm{i}kx\cdot (U_0^T d)}),
\end{equation}
which in combination with \eqref{eq:s6} readily gives that
\begin{equation}\label{eq:s9}
u_\infty(\Omega_0, e^{\mathrm{k}x\cdot d'})=u_\infty(\Omega_j, e^{\mathrm{i}kx\cdot d'}), \quad d':=U_0^T d. 
\end{equation}
This clearly contradicts to \eqref{eq:r2}. 

On the other hand, one can also show that there cannot exists a set $\mathcal{D}\subset\mathbb{S}^{n-1}$ with positive measure such that \eqref{eq:s2} holds for all $d\in\mathcal{D}$, since otherwise by analytic continuation again, one has that \eqref{eq:s2} holds for all $d\in\mathbb{S}^{n-1}$. Then by using the known unique identifiability result (cf. \cite{}), one has $\Omega=\Omega'$. 

In conclusion, if \eqref{eq:s2} holds, it is a.s. that $\Omega=\Omega'$. The proof is complete.

\end{proof}

 \section{Outlook and viewpoint}
 
 In this short note, we consider the longstanding Schiffer's problem in the inverse scattering theory, arguing that it can actually be solved from a certain probability sense. Theorems~\ref{thm:1} and \ref{thm:2} basically indicate that by by randomly choosing an incident wave to generate a single far-field measurement, one can distinguish two obstacles almost surely. Theorem~\ref{thm:3} states that if the obstacle is from an a-priori class with a ``finite-dimensional" generating set, then by randomly choosing an incident wave to generate a single far-field measurement, one can uniquely determine the underlying obstacle almost surely. Here, the ``finite dimensionalbity" is not understood in the usual linear sense. It actually means an unknown configuration with a finite number of parameters. Clearly, the theorem can be easily extended to the case with countably many generating base shapes. This new perspective brings interesting implications to practical applications, especially from the algorithmic point of view. Clearly, this viewpoint can be extended to the other scattering problems, say e.g. electromagnetic or elastic scattering \cite{DL}, or even broader inverse problems. In fact, for many inverse problems, one usually establishes conditional unique identifiability results. Those so-called ``conditions" define the a-priori classes of the unknown target objects. This is totally understandable since it is widely accepted that for inverse problems, no mathematical trickeries can remedy the lack of a-priori information. But here we made a crucial observation is that the measurement dataset usually depends on the a-priori class analytically. Hence, if the a-priori class possesses a countable/separable generating set, one should be able to establish more practical unique identifiability results in the probability sense. Nevertheless, it is pointed out that inverse problems are generically ill-conditioned, and it is of high practical and theoretical interest to establish the stability results in the probability sense. Let us still take the Schiffer's problem as an example. In fact, though for a randomly chosen $k$ or $d$, the failure probability of uniquely determining the obstacle $\Omega$ is zero, the recovery can be nearly failed locally around those potential ``failing" $k$ or $d$, whose probability can be nonzero. Clearly, such a ``stable success" probability depends on the a-priori class as well as the underlying physics of the inverse problem.  
 
Finally, we would like to emphasise that deriving a deterministic answer to the Schiffer's problem still remains one of the crown jewels in inverse scattering theory.

\section*{Acknowledgement}
The research was supported by NSFC/RGC Joint Research Scheme, N CityU101/21, ANR/RGC
Joint Research Scheme, A-CityU203/19, and the Hong Kong RGC General Research Funds (projects 11311122, 11304224 and 11300821).

\bibliographystyle{amsalpha}

\end{document}